\title{ON THE PROPERTIES OF THE ARON-BERNER REGULARITY OF BOUNDED TRI-LINEAR MAPS}
\author{Neda Akhlaghi}
\address{$^1$Independent Researcher, e-mail: {\tt Neda.akhlaghi1365@gmail.com}}
\author{Kazem Haghnejad Azar}
\address{$^2$Department  of  Mathematics  and  Applications, University of Mohaghegh Ardabili, Ardabil, Iran, e-mail: {\tt Haghnejad@aut.ac.ir}}
\author{Abotaleb Sheikhali}
\address{$^3$Independent Researcher, e-mail: {\tt Abotaleb.sheikhali.20@gmail.com}}
\begin{document}
\pagestyle{headings}
\maketitle

\begin{abstract}

\mm \mm

{\it \quad\qu  Let $f:X\times Y\times Z\longrightarrow W $ be a bounded tri-linear map on normed spaces. We say that $f$ is close-to-regular when $f^{t****s}=f^{s****t}$ and $f$ is Aron-Berener regular when all natural extensions are equal. In this manuscript, we have some results on the Aron-Berner regular maps. We investigate the relation between Arens regularity of bounded bilinear maps and Aron-Berner regularity of bounded tri-linear maps. We  also give a simple criterion for the Aron-Berner regularity of tri-linear maps.}
\end{abstract}

\begin{Keywords}
Arens product, Aron-Berner regular, Close-to-regular, Tri-linear map.
\end{Keywords}

2010 Mathematics Subject Clasification: 46H25; 46H20; 17C65

\section{Introduction}
\label{intro}
In \cite{Arens2},  Richard Arens showed  that a bounded bilinear map $m:X\times Y\longrightarrow Z $ on normed spaces, has two natural  different extensions $m^{***}$, $m^{r***r}$ from $X^{**}\times Y^{**}$ into $Z^{**}$. When these extensions are equal, $m$ is called Arens regular. A Banach algebra $A$ is said to be Arens regular, if its product $\pi(a,b)=ab$ considered as a bilinear mapping $\pi : A\times A\longrightarrow A$ is Arens regular. 
For a discussion of Arens regularity for  bounded bilinear maps and Banach algebras, see \cite{Arikan1}, \cite{Eshaghi}, \cite{Haghnejad Azar}, \cite{Mohamadzadeh}, \cite{ulger} and \cite{sheikh}. For example, every $C^{*}$-algebra is Arens regular, see \cite{Civin and Yood}. Arens himself proved in \cite{Arens2} that the convolution semigroup algebra $l^{1}$  is not Arens regular. Also Yong proved in \cite{young} that $L^{1}(G)$ is Arens regular if and only if G is finite. 

Let $X, Y, Z$ and $W$ be normed spaces and $f:X\times Y\times Z\longrightarrow W $ be a bounded tri-linear mapping. One of the natural extensions  of $f$ can be derived by the following procedure:
\begin{enumerate}
\item $f^{*}:W^{*}\times X\times Y\longrightarrow Z^{*}$, given by $\langle f^{*}(w^{*},x,y),z\rangle=\langle w^{*},f(x,y,z)\rangle$ where $x\in X, y\in Y, z\in Z, w^{*}\in W^{*}$. 

The map $f^*$ is a bounded tri-linear mapping and is said  the adjoint of $f$.

\item $f^{**}=(f^*)^*:Z^{**}\times W^{*}\times X\longrightarrow Y^{*}$, given by $\langle f^{**}(z^{**},w^{*},x),y\rangle=\langle z^{**}, f^{*}(w^{*},x,y)$ where $x\in X, y\in Y, z^{**}\in Z^{**}, w^{*}\in W^{*}$.

\item $f^{***}=(f^{**})^*:Y^{**}\times Z^{**}\times W^{*}\longrightarrow X^{*}$, given by $\langle f^{***}(y^{**},z^{**},w^{*}),x\rangle=\langle y^{**},  f^{**}(z^{**},w^{*},x) \rangle$ where $x\in X, y^{**}\in Y^{**}, z^{**}\in Z^{**}, w^{*}\in W^{*}$.

\item $f^{****}=(f^{***})^*:X^{**}\times Y^{**}\times Z^{**}\longrightarrow W^{**}$, given by $\langle f^{****}(x^{**},
y^{**},z^{**})$, $w^{*}\rangle =\langle x^{**}, f^{***}(y^{**},z^{**},w^{*}) \rangle$ where $x^{**}\in X^{**}, y^{**}\in Y^{**}, z^{**}\in Z^{**}, w^{*}\in W^{*}$.
\end{enumerate}
The bounded tri-linear map $f^{****}$ is the extension of $f$ such that the maps 
\begin{eqnarray*}
&&x^{**}\longrightarrow f^{****}(x^{**},y^{**},z^{**}):X^{**}\longrightarrow W^{**},\\
&&y^{**}\longrightarrow f^{****}(x,y^{**},z^{**}):Y^{**}\longrightarrow W^{**},\\
&&z^{**}\longrightarrow f^{****}(x,y,z^{**}):Z^{**}\longrightarrow W^{**},
\end{eqnarray*}
are weak$^{*}-$weak$^{*}$ continuous for each $x\in X, y\in Y, x^{**}\in X^{**}, y^{**}\in Y^{**}$ and $z^{**}\in Z^{**}$. See \cite{sheikhd}.
Now let 
\begin{eqnarray*}
&&f^i:Y\times X\times Z\longrightarrow W : f^i(y,x,z)=f(x,y,z),\\
&&f^j:X\times Z\times Y\longrightarrow W : f^j(x,z,y)=f(x,y,z),\\
&&f^r:Z\times Y\times X\longrightarrow W : f^r(z,y,x)=f(x,y,z),\\
&&f^t:Z\times X\times Y\longrightarrow W : f^t(z,x,y)=f(x,y,z),\\
&&f^s:Y\times Z\times X\longrightarrow W : f^s(y,z,x)=f(x,y,z),
\end{eqnarray*}
be the flip maps of $f$, for every $x\in X ,y\in Y$ and $z\in Z$. The flip maps of $f$ are bounded tri-linear maps. 
For natural extensions of $f$ we have
\begin{eqnarray*}
&& f^{****}(x^{**},y^{**},z^{**})=w^{*}-\lim\limits_\alpha \lim\limits_\beta \lim\limits_\gamma f(x_\alpha,y_\beta,z_{\gamma}),\\
&& f^{i****i}(x^{**},y^{**},z^{**})=w^{*}-\lim\limits_\beta \lim\limits_\alpha \lim\limits_\gamma f(x_\alpha,y_\beta,z_{\gamma}),\\
&& f^{j****j}(x^{**},y^{**},z^{**})=w^{*}-\lim\limits_\alpha \lim\limits_\gamma \lim\limits_\beta f(x_\alpha,y_\beta,z_{\gamma}),\\
&& f^{r****r}(x^{**},y^{**},z^{**})=w^{*}-\lim\limits_\gamma \lim\limits_\beta \lim\limits_\alpha f(x_\alpha,y_\beta,z_{\gamma}),\\
&& f^{t****s}(x^{**},y^{**},z^{**})=w^{*}-\lim\limits_\gamma \lim\limits_\alpha \lim\limits_\beta f(x_\alpha,y_\beta,z_{\gamma}),\\
&& f^{s****t}(x^{**},y^{**},z^{**})=w^{*}-\lim\limits_\beta \lim\limits_\gamma \lim\limits_\alpha f(x_\alpha,y_\beta,z_{\gamma}),
\end{eqnarray*}
where $(x_{\alpha}), (y_{\beta})$ and $(z_{\gamma})$ are nets in $X, Y$ and $Z$  which converge to $x^{**}\in X^{**},y^{**}\in Y^{**}$ and $z^{**}\in Z^{**}$  in the $w^{*}-$topologies, respectively.
 
A bounded tri-linear map $f$ is said to be close-to-regular if $f^{t****s}=f^{s****t}$. Some characterizations for the close-to-regularity of bounded tri-linear map $f$  can be found in \cite{sheikhoo}, authors showed that  if $Y$ is reflexive, then $f:X\times Y\times Z\longrightarrow W$ is close-to-regular. Also the bounded tri-linear map $f$ is said to be Aron–Berner regular when all natural extensions are equal, that is, $f^{i****i}=f^{j****j}=f^{r****r}=f^{****}=f^{t****s}=f^{s****t}$ holds. For example in \cite{Khosravi} it has been shown that the tri-linear map $f:X\times X\times X\longrightarrow X$ deﬁned by
$$f(x_{1}, x_{2}, x_{3})=\langle \phi,x_{1}\rangle \langle \psi ,x_{2}\rangle x_{3}  \ \ \ \ \ \ \      (x_{1}, x_{2}, x_{3} \in X),$$
for every $\phi , \psi \in X^{*}$ is Aron–Berner regular. If $f$ is Aron–Berner regular, then trivially $f$ is close-to-regular. 

\section{Aron-Berner regularity of bounded tri-linear maps}
\begin{theorem}\label{2.1}
Let $f:X\times Y\times Z\longrightarrow W$ be a bounded tri-linear map. Then
\begin{enumerate}
\item $f$ is Aron-Berner regular if and only if $f^{t****s}=f^{****}=f^{s****t}.$ 

\item $f$ is Aron-Berner regular if and only if $f^{i****i}=f^{j****j}=f^{r****r}.$
\end{enumerate}
\end{theorem}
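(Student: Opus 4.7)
Both forward implications in (1) and (2) are immediate from the definition of Aron-Berner regularity, so I focus on the converses. My plan for (1) is to observe two structural facts that follow immediately from the iterated-limit formulas, and then use $w^{*}$-density to propagate equality of certain extensions on ``coordinate faces'' to equality on the whole bidual.

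The first fact records that each of the six extensions is (always) $w^{*}$-$w^{*}$ continuous in exactly one of its three arguments, namely the one indexed by the outermost limit: $f^{****}, f^{t****s}, f^{s****t}$ in $x^{**}, z^{**}, y^{**}$ respectively, and $f^{i****i}, f^{j****j}, f^{r****r}$ in $y^{**}, x^{**}, z^{**}$ respectively. The second records the restrictions of the six extensions to coordinate faces: for $z \in Z$ the bilinear section $g_{z}(x,y):=f(x,y,z)$ has two Arens extensions, and the six extensions of $f$ split 3--3 on $X^{**}\times Y^{**}\times\{z\}$ into $g_{z}^{***}$ (namely $f^{****}, f^{j****j}, f^{t****s}$) and $g_{z}^{r***r}$ (the remaining three). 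Analogous splittings hold on the $x\in X$ face via $h_{x}(y,z):=f(x,y,z)$ and on the $y\in Y$ face via $k_{y}(x,z):=f(x,y,z)$.

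Under the hypothesis of (1), set $F:=f^{****}=f^{t****s}=f^{s****t}$. The first fact makes $F$ separately $w^{*}$-$w^{*}$ continuous in all three variables, and the second, applied to the three assumed equalities on the three faces, forces $g_{z}, h_{x}, k_{y}$ to be Arens regular for every $x, y, z$. To conclude $f^{i****i}=F$, I would note that both maps are $w^{*}$-$w^{*}$ continuous in $y^{**}$ (for $F$ via $F=f^{s****t}$) and that they coincide on the face $X^{**}\times Y\times Z^{**}$, where each equals $k_{y}^{***}$; the $w^{*}$-density of $Y$ in $Y^{**}$ then forces equality on all of $X^{**}\times Y^{**}\times Z^{**}$. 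The equalities $f^{j****j}=F$ and $f^{r****r}=F$ follow by the same pattern, using continuity in $x^{**}$ together with the Arens regularity of $h_{x}$, and continuity in $z^{**}$ together with the Arens regularity of $g_{z}$, to verify the relevant face agreement. Part (2) is entirely symmetric with the roles of cyclic and anti-cyclic orderings swapped. The only real obstacle I expect is combinatorial bookkeeping, keeping straight which extension is continuous in which variable and which bilinear section mediates each pairing; no analytic input beyond separate $w^{*}$-continuity of the extensions and $w^{*}$-density of $X, Y, Z$ in their biduals is needed.
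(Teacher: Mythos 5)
Your proposal is correct and is essentially the paper's own argument in more conceptual clothing: the paper's three displayed computations each collapse the inner two limits to a face value (your bilinear sections $g_z,h_x,k_y$), apply one of the assumed equalities on that face, and pass the outer limit through using $w^{*}$--$w^{*}$ continuity of the relevant extension in its outermost variable, which is exactly your ``face agreement plus separate continuity plus density'' scheme. The bookkeeping you flag (which extension is continuous in which variable, and which section's Arens regularity mediates each of the three comparisons) all checks out.
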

\begin{proof}
We prove only (1), the other part has the same argument. If $f$ is Aron-Berner regular then $f^{t****s}=f^{****}=f^{s****t}.$\\
For the converse, suppose that $(x_{\alpha}), (y_{\beta})$ and $(z_{\gamma})$ are nets in $X, Y$ and $Z$ converge to $x^{**}\in X^{**},y^{**}\in Y^{**}$ and $z^{**}\in Z^{**}$  in the $w^{*}-$topologies, respectively. If $f^{****}=f^{s****t}$, then for every $w^{*}\in W^{*}$ we have
\begin{eqnarray*}
&&\langle f^{****}(x^{**},y^{**},z^{**}),w^{*}\rangle =\lim\limits_\beta\lim\limits_\alpha\lim\limits_\gamma \langle f(x_{\alpha},y_{\beta},z_{\gamma}),w^{*}\rangle\\
&&=\lim\limits_\beta\lim\limits_\alpha\lim\limits_\gamma \langle f^{*}(w^{*},x_{\alpha},y_{\beta}),z_{\gamma}\rangle=\lim\limits_\beta\lim\limits_\alpha \langle z^{**}, f^{*}(w^{*},x_{\alpha},y_{\beta})\rangle\\
&&=\lim\limits_\beta\lim\limits_\alpha\langle f^{**}(z^{**},w^{*},x_{\alpha}),y_{\beta}\rangle=\lim\limits_\beta\lim\limits_\alpha\langle f^{***}(y_{\beta},z^{**},w^{*}),x_{\alpha}\rangle\\
&&=\lim\limits_\beta\langle x^{**},f^{***}(y_{\beta},z^{**},w^{*})\rangle=\lim\limits_\beta\langle f^{****}(x^{**},y_{\beta},z^{**}),w^{*}\rangle\\
&&=\lim\limits_\beta\langle f^{s****t}(x^{**},y_{\beta},z^{**}),w^{*}\rangle=\lim\limits_\beta\langle f^{s****}(y_{\beta},z^{**},x^{**}),w^{*}\rangle\\
&&=\lim\limits_\beta\langle y_{\beta},f^{s***}(z^{**},x^{**},w^{*})\rangle=\langle y^{**},f^{s***}(z^{**},x^{**},w^{*})\rangle\\
&&=\langle f^{s****}(y^{**},z^{**},x^{**}),w^{*}\rangle =\langle f^{s****t}(x^{**},y^{**},z^{**}),w^{*}\rangle.
\end{eqnarray*}
Therefore $$f^{i****i}=f^{s****t}\ \ \ \ \ \ \ \ \ \ \ \ \ \ \ (2-1)$$
Now if $f^{****}=f^{t****s}$, then we have 

\begin{eqnarray*}
&&\langle f^{j****j}(x^{**},y^{**},z^{**}),w^{*}\rangle =\lim\limits_\alpha\lim\limits_\gamma\lim\limits_\beta \langle f(x_{\alpha},y_{\beta},z_{\gamma}),w^{*}\rangle\\
&&=\lim\limits_\alpha\lim\limits_\gamma\lim\limits_\beta \langle f^{t}(z_{\gamma},x_{\alpha},y_{\beta}),w^{*}\rangle=\lim\limits_\alpha\lim\limits_\gamma\lim\limits_\beta \langle f^{t*}(w^{*},z_{\gamma},x_{\alpha}),y_{\beta}\rangle\\
&&=\lim\limits_\alpha\lim\limits_\gamma\langle y^{**},f^{t*}(w^{*},z_{\gamma},x_{\alpha})\rangle=\lim\limits_\alpha\lim\limits_\gamma\langle f^{t**}(y^{**},w^{*},z_{\gamma}),x_{\alpha}\rangle\\
&&=\lim\limits_\alpha\lim\limits_\gamma\langle f^{t***}(x_{\alpha},y^{**},w^{*}),z_{\gamma}\rangle=\lim\limits_\alpha\langle z^{**},f^{t***}(x_{\alpha},y^{**},w^{*})\rangle\\
&&=\lim\limits_\alpha \langle f^{t****}(z^{**},x_{\alpha},y^{**}),w^{*}\rangle=\lim\limits_\alpha \langle f^{t****s}(x_{\alpha},y^{**},z^{**}),w^{*}\rangle\\
&&=\lim\limits_\alpha \langle f^{****}(x_{\alpha},y^{**},z^{**}),w^{*}\rangle=\lim\limits_\alpha \langle x_{\alpha},f^{***}(y^{**},z^{**},w^{*})\rangle\\
&&=\langle x^{**},f^{***}(y^{**},z^{**},w^{*})\rangle=\langle f^{****}(x^{**},y^{**},z^{**}),w^{*}\rangle.
\end{eqnarray*}
Therefore $$f^{j****j}=f^{****}\ \ \ \ \ \ \ \ \ \ \ \ \ \ \ (2-2)$$
On the other hand if $f^{t****s}=f^{s****t}$, then we have 

\begin{eqnarray*}
&&\langle f^{r****r}(x^{**},y^{**},z^{**}),w^{*}\rangle =\lim\limits_\gamma\lim\limits_\beta\lim\limits_\alpha \langle f(x_{\alpha},y_{\beta},z_{\gamma}),w^{*}\rangle\\
&&=\lim\limits_\gamma\lim\limits_\beta\lim\limits_\alpha \langle f^{s}(y_{\beta},z_{\gamma},x_{\alpha}),w^{*}\rangle=\lim\limits_\gamma\lim\limits_\beta\lim\limits_\alpha \langle f^{s*}(w^{*},y_{\beta},z_{\gamma}),x_{\alpha}\rangle\\
&&=\lim\limits_\gamma\lim\limits_\beta \langle x^{**},f^{s*}(w^{*},y_{\beta},z_{\gamma})\rangle=\lim\limits_\gamma\lim\limits_\beta \langle f^{s**}(x^{**},w^{*},y_{\beta}),z_{\gamma}\rangle\\
&&=\lim\limits_\gamma\lim\limits_\beta \langle f^{s***}(z_{\gamma},x^{**},w^{*}),y_{\beta}\rangle=\lim\limits_\gamma \langle y^{**},f^{s***}(z_{\gamma},x^{**},w^{*})\rangle\\
&&=\lim\limits_\gamma \langle f^{s****}(y^{**},z_{\gamma},x^{**}),w^{*}\rangle=\lim\limits_\gamma \langle f^{s****t}(x^{**},y^{**},z_{\gamma}),w^{*}\rangle\\
&&=\lim\limits_\gamma \langle f^{t****s}(x^{**},y^{**},z_{\gamma}),w^{*}\rangle=\lim\limits_\gamma \langle f^{t****}(z_{\gamma},x^{**},y^{**}),w^{*}\rangle\\
&&=\lim\limits_\gamma \langle z_{\gamma},f^{t***}(x^{**},y^{**},w^{*})\rangle=\langle z^{**},f^{t***}(x^{**},y^{**},w^{*})\rangle\\
&&=\langle f^{t****}(z^{**},x^{**},y^{**}),w^{*}\rangle=\langle f^{t****s}(x^{**},y^{**},z^{**}),w^{*}\rangle.
\end{eqnarray*}
Therefore $$f^{r****r}=f^{t****s}\ \ \ \ \ \ \ \ \ \ \ \ \ \ \ (2-3)$$
Now Using (2-1), (2-2) and (2-3), the result holds.
\end{proof}

As an immediate consequence of Theorem \ref{2.1}, we deduce the next results.
\begin{corollary}
Let $f:X\times Y\times Z\longrightarrow W$ be a bounded tri-linear map. $f^{t****s}=f^{****}=f^{s****t}$ if and only if $f^{i****i}=f^{j****j}=f^{r****r}.$

\end{corollary}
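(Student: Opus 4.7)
The plan is to observe that this corollary is a direct composition of the two equivalences established in Theorem \ref{2.1}. Part (1) of that theorem says that Aron-Berner regularity of $f$ is equivalent to the chain of equalities $f^{t****s}=f^{****}=f^{s****t}$, and part (2) says that Aron-Berner regularity is equivalent to $f^{i****i}=f^{j****j}=f^{r****r}$. Since both conditions are characterizations of the same property, they must be equivalent to each other.

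Concretely, I would proceed as follows. First, assume $f^{t****s}=f^{****}=f^{s****t}$. By Theorem \ref{2.1}(1), $f$ is Aron-Berner regular, so by definition all natural extensions coincide; in particular $f^{i****i}=f^{j****j}=f^{r****r}$. Conversely, assume $f^{i****i}=f^{j****j}=f^{r****r}$. Then Theorem \ref{2.1}(2) yields that $f$ is Aron-Berner regular, whence again all natural extensions agree, and in particular $f^{t****s}=f^{****}=f^{s****t}$.

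There is no real obstacle here; the corollary is purely a logical consequence of the biconditionals in Theorem \ref{2.1}, requiring no additional computation with the adjoints or iterated weak$^{*}$ limits. The only thing to be careful about is to cite the correct part of Theorem \ref{2.1} in each direction of the implication.
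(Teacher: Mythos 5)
Your argument is correct and is exactly how the paper treats this statement: it is listed as an immediate consequence of Theorem \ref{2.1}, obtained by chaining the two biconditionals through the common property of Aron--Berner regularity. No further computation is needed.
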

\begin{corollary}
Let $f:X\times Y\times Z\longrightarrow W$ be a bounded tri-linear map. Then
\begin{enumerate}
\item $f$ is Aron-Berner regular if and only if 
\begin{eqnarray*}
w^{*}-\lim\limits_\gamma \lim\limits_\alpha \lim\limits_\beta f(x_\alpha,y_\beta,z_{\gamma})&=&w^{*}-\lim\limits_\alpha \lim\limits_\beta \lim\limits_\gamma f(x_\alpha,y_\beta,z_{\gamma})\\
&=&w^{*}-\lim\limits_\beta \lim\limits_\gamma \lim\limits_\alpha f(x_\alpha,y_\beta,z_{\gamma}).
\end{eqnarray*}
\item $f$ is Aron-Berner regular if and only if 
\begin{eqnarray*}
w^{*}-\lim\limits_\beta \lim\limits_\alpha \lim\limits_\gamma f(x_\alpha,y_\beta,z_{\gamma})
&=&w^{*}-\lim\limits_\alpha \lim\limits_\gamma \lim\limits_\beta f(x_\alpha,y_\beta,z_{\gamma})\\
&=&w^{*}-\lim\limits_\gamma \lim\limits_\beta \lim\limits_\alpha f(x_\alpha,y_\beta,z_{\gamma}).
\end{eqnarray*}
\end{enumerate}
\end{corollary}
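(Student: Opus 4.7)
The plan is to recognize that this corollary is essentially a reformulation of Theorem \ref{2.1} written in the language of the iterated weak-$*$ limit formulas listed at the end of Section \ref{intro}, so the proof should reduce to a short dictionary translation rather than a fresh computation.

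First I would recall the six displayed identifications from the introduction, which express each of $f^{****}$, $f^{i****i}$, $f^{j****j}$, $f^{r****r}$, $f^{t****s}$ and $f^{s****t}$ as a specific iterated $w^{*}$-limit of $f(x_\alpha, y_\beta, z_\gamma)$ along nets in $X$, $Y$, $Z$ that converge weak-$*$ to prescribed elements of $X^{**}$, $Y^{**}$, $Z^{**}$. By Goldstine's theorem such approximating nets always exist, and because the natural extensions are defined on the entire bidual, the value of each iterated limit is independent of the chosen nets. Consequently, the pointwise equality on $X^{**}\times Y^{**}\times Z^{**}$ of two of these extensions is equivalent to the equality of the corresponding iterated limits for every admissible choice of approximating nets.

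For part (1), the three iterated $w^{*}$-limits displayed are, in the order written, exactly $f^{t****s}$, $f^{****}$ and $f^{s****t}$. Hence the chain of equalities appearing in the corollary is nothing but $f^{t****s}=f^{****}=f^{s****t}$, which by Theorem \ref{2.1}(1) is equivalent to the Aron--Berner regularity of $f$. Part (2) is handled in precisely the same way: the three displayed iterated $w^{*}$-limits match $f^{i****i}$, $f^{j****j}$ and $f^{r****r}$ respectively, and their coincidence is equivalent to Aron--Berner regularity by Theorem \ref{2.1}(2).

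I do not anticipate any real obstacle. The only point that would merit a brief verification is the net-independence claim in the second paragraph, i.e.\ that each iterated weak-$*$ limit agrees with the corresponding natural extension regardless of the approximating nets; but this is already encoded in the identifications given in Section \ref{intro}, and once the notational correspondence is made explicit the corollary follows at once from Theorem \ref{2.1}.
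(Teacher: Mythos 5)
Your proposal is correct and matches the paper's intent exactly: the paper states this corollary without proof as an immediate consequence of Theorem \ref{2.1}, and your dictionary translation (the three iterated limits in part (1) being $f^{t****s}$, $f^{****}$, $f^{s****t}$ and those in part (2) being $f^{i****i}$, $f^{j****j}$, $f^{r****r}$) is precisely the identification the authors rely on.
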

\begin{corollary}
For a bounded tri-linear map $f:X\times Y\times Z\longrightarrow W$  the following statements are equivalent:
\begin{enumerate}
\item $f$ is Aron-Berner regular,

\item $f^{t}$ and $f^{s}$ are close-to-regular,

\item $f^{i}$ and $f^{r}$ are close-to-regular,

\item $f^{i}$ and $f^{j}$ are close-to-regular.
\end{enumerate}
\end{corollary}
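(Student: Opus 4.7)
The plan is to reduce each condition (2)--(4) to an equality among certain natural extensions of $f$ and then invoke Theorem~\ref{2.1}. The key observation is purely combinatorial: for a bounded tri-linear map $g:A\times B\times C\to W$, the operations $t****s$ and $s****t$ rearrange the order of the iterated limits according to the two 3-cycles in $S_{3}$. When applied to a flip $f^{p}$ whose arguments are themselves a permutation of $(x,y,z)$, substituting $f^{p}(\cdots)=f(x_{\alpha},y_{\beta},z_{\gamma})$ back into the defining iterated limits converts $(f^{p})^{t****s}$ and $(f^{p})^{s****t}$ into two of the six natural extensions of $f$ itself, so that close-to-regularity of $f^{p}$ becomes an equality between two such extensions.

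Carrying out this substitution in each of the five cases yields the dictionary: $f^{s}$ is close-to-regular if and only if $f^{****}=f^{t****s}$; $f^{t}$ is close-to-regular if and only if $f^{****}=f^{s****t}$; $f^{i}$ is close-to-regular if and only if $f^{r****r}=f^{j****j}$; $f^{r}$ is close-to-regular if and only if $f^{j****j}=f^{i****i}$; and $f^{j}$ is close-to-regular if and only if $f^{i****i}=f^{r****r}$. Each entry follows by writing the defining $\lim\lim\lim$ formula for the flip, unflipping the integrand, and reading off the matching extension from the preliminary list.

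With these identifications in hand the equivalences drop out. Condition (2) becomes exactly $f^{t****s}=f^{****}=f^{s****t}$, which by Theorem~\ref{2.1}(1) is equivalent to (1). Condition (3) forces $f^{r****r}=f^{j****j}=f^{i****i}$, which by Theorem~\ref{2.1}(2) is equivalent to (1). Condition (4) combines its two entries to give $f^{i****i}=f^{r****r}=f^{j****j}$, again Theorem~\ref{2.1}(2). Conversely, (1) trivially implies every equality above, so (2), (3) and (4) each follow from (1).

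The only nontrivial step is the combinatorial translation itself: each flip $f^{p}$ permutes the three argument slots, and the operators $t****s$ and $s****t$ permute the ordering of the iterated limits, so one must compose these permutations carefully to avoid mis-labelling the extension that emerges. This is mechanical but the main source of potential error; once the five dictionary entries are established, no analytic content remains beyond Theorem~\ref{2.1}.
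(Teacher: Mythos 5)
Your proposal is correct and follows essentially the same route as the paper: translate close-to-regularity of each flip $f^{p}$ into an equality between two of the six natural extensions of $f$ (your dictionary entries agree with the paper's, e.g.\ $f^{r}$ close-to-regular iff $f^{j****j}=f^{i****i}$ and $f^{i}$ close-to-regular iff $f^{j****j}=f^{r****r}$), and then invoke Theorem~\ref{2.1}. The only cosmetic difference is that you derive the dictionary from the iterated weak$^{*}$-limit formulas while the paper composes flip identities such as $f^{rt}=f^{j}=f^{sr}$; both are the same computation.
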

\begin{proof}
We prove only $(1)\Rightarrow (3)$ and $(3)\Rightarrow (1)$, the other parts $(1)\Rightarrow (2)$ and $(2)\Rightarrow (1)$ , $(1)\Rightarrow (4)$ and $(4)\Rightarrow (1)$ have the same argument.

Since the $f^{rt}=f^{j}=f^{sr}$ and $f^{rs}=f^{i}=f^{tr}$, thus $f^{r}$ is close-to-regular  if and only if
$$f^{rt****s}=f^{rs****t}\Leftrightarrow f^{rt****sr}=f^{rs****tr}\Leftrightarrow f^{j****j}=f^{i****i}.$$
Similarly, $f^{i}$ is close-to-regular  if and only if $f^{j****j}=f^{r****r}.$ Now by theorem \ref{2.1} completes the proof.
\end{proof}
\begin{corollary}
Let $f:X\times Y\times Z\longrightarrow W$ be a bounded tri-linear map. Then $f$ is Aron-Berner regular if and only if $f^{****s**t}(Y^{**},X^{**},W^{*})=f^{s**t****}(Y^{**},X^{**},W^{*})$ and $f^{t**s****}(W^{*},Z^{**},Y^{**})=f^{****t**s}(W^{*},Z^{**},Y^{**})$.
\end{corollary}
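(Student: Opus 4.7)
The plan is to reduce the two claimed equalities to $f^{****}=f^{s****t}$ and $f^{****}=f^{t****s}$, after which Theorem~\ref{2.1}(1) applies directly.

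First I would fix the notational interpretation. Tracking input/output types through each adjoint, $f^{****s**t}$ and $f^{s**t****}$ both end up as trilinear maps $Y^{**}\times X^{****}\times W^{***}\longrightarrow Z^{***}$, while $f^{****t**s}$ and $f^{t**s****}$ are both trilinear maps $W^{***}\times Z^{**}\times Y^{****}\longrightarrow X^{***}$. The expression $f^{****s**t}(Y^{**},X^{**},W^{*})$ is therefore to be read as evaluation at $(y^{**},x^{**},w^{*})$ with $x^{**}$ and $w^{*}$ identified with their canonical images in $X^{****}$ and $W^{***}$, and similarly for the other statement.

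Next I would unwind each of the four iterated adjoints by repeatedly applying the defining relation $\langle g^{*}(d^{*},a,b),c\rangle=\langle d^{*},g(a,b,c)\rangle$, together with the canonical embedding identities $\langle \iota(x^{**}),x^{***}\rangle=\langle x^{***},x^{**}\rangle$ and $\langle \iota(w^{*}),w^{**}\rangle=\langle w^{**},w^{*}\rangle$, which collapse one layer of duality each time a naturally embedded coordinate is consumed. After all six $*$ steps have been absorbed, what remains is a three-fold iterated weak-$*$ limit of $\langle f(x_{\alpha},y_{\beta},z_{\gamma}),w^{*}\rangle$ whose order is dictated by where the flips $s$ and $t$ were inserted. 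Matching these limits with the expressions displayed in the introduction, the computation produces
\begin{eqnarray*}
\langle f^{****s**t}(y^{**},x^{**},w^{*}),z^{**}\rangle &=& \langle f^{****}(x^{**},y^{**},z^{**}),w^{*}\rangle,\\
\langle f^{s**t****}(y^{**},x^{**},w^{*}),z^{**}\rangle &=& \langle f^{s****t}(x^{**},y^{**},z^{**}),w^{*}\rangle,\\
\langle f^{****t**s}(w^{*},z^{**},y^{**}),x^{**}\rangle &=& \langle f^{****}(x^{**},y^{**},z^{**}),w^{*}\rangle,\\
\langle f^{t**s****}(w^{*},z^{**},y^{**}),x^{**}\rangle &=& \langle f^{t****s}(x^{**},y^{**},z^{**}),w^{*}\rangle,
\end{eqnarray*}
for all $x^{**}\in X^{**}$, $y^{**}\in Y^{**}$, $z^{**}\in Z^{**}$ and $w^{*}\in W^{*}$. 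Consequently the first stated equality is equivalent to $f^{****}=f^{s****t}$ and the second to $f^{****}=f^{t****s}$; together they give $f^{t****s}=f^{****}=f^{s****t}$, so Theorem~\ref{2.1}(1) settles both directions at once.

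The main obstacle is the bookkeeping in the unwinding step: each six-starred composition interleaves six dualisations with two flips, and one must track simultaneously the current ordering of inputs (changed at each occurrence of $s$ and $t$) and the current codomain, in order to invoke $\langle g^{*}(d^{*},a,b),c\rangle=\langle d^{*},g(a,b,c)\rangle$ with the correct underlying $g$, and to invoke the canonical embeddings at exactly those positions where they collapse the duality ladder back down to values of the original $f$. Once this bookkeeping is set up, the reduction to the iterated limits on the right of the displayed equalities is immediate.
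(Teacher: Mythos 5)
Your proposal is correct and follows essentially the same route as the paper: both establish the four bridging identities relating $f^{****s**t}$, $f^{s**t****}$, $f^{****t**s}$, $f^{t**s****}$ (restricted to the canonical images) to $f^{****}$, $f^{s****t}$ and $f^{t****s}$, and then invoke Theorem~\ref{2.1}(1); your type-checking of the domains and the four displayed identities agree with the paper's computation. The only organizational difference is that you prove the identities unconditionally and deduce both directions at once, whereas the paper threads the hypothesis through the middle of the chain for the forward direction and states the converse implications separately.
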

\begin{proof}
Let $f$ be Aron-Berner regular, then $f^{****}=f^{s****t}$. If $(y_{\beta})$ is a net in $Y$ converge to $y^{**}\in Y^{**}$ in the $w^{*}-$topologies, then we have

\begin{eqnarray*}
&&\langle f^{****s**t}(y^{**},x^{**},w^{*}),z^{**}\rangle=\langle f^{****s**}(x^{**},w^{*},y^{**}),z^{**}\rangle\\
&&=\langle f^{****s*}(w^{*},y^{**},z^{**}),x^{**}\rangle=\langle w^{*},f^{****s}(y^{**},z^{**},x^{**})\rangle\\
&&=\langle f^{****}(x^{**},y^{**},z^{**}),w^{*}\rangle=\langle f^{s****t}(x^{**},y^{**},z^{**}),w^{*}\rangle\\
&&=\langle f^{s****}(y^{**},z^{**},x^{**}),w^{*}\rangle=\langle y^{**},f^{s***}(z^{**},x^{**},w^{*})\rangle\\
&&=\lim\limits_\beta \langle f^{s***}(z^{**},x^{**},w^{*}),y_{\beta}\rangle=\lim\limits_\beta \langle z^{**},f^{s**}(x^{**},w^{*},y_{\beta})\rangle\\
&&=\lim\limits_\beta \langle z^{**},f^{s**t}(y_{\beta},x^{**},w^{*})\rangle=\lim\limits_\beta \langle f^{s**t*}(z^{**},y_{\beta},x^{**}),w^{*}\rangle\\
&&=\lim\limits_\beta \langle f^{s**t**}(w^{*},z^{**},y_{\beta}),x^{**}\rangle=\lim\limits_\beta \langle f^{s**t***}(x^{**},w^{*},z^{**}),y_{\beta}\rangle\\
&&=\langle y^{**},f^{s**t***}(x^{**},w^{*},z^{**})\rangle=\langle f^{s**t****}(y^{**},x^{**},w^{*}),z^{**}\rangle.
\end{eqnarray*}
Therefore $f^{****s**t}(Y^{**},X^{**},W^{*})=f^{s**t****}(Y^{**},X^{**},W^{*})$. A similar argument shows that $f^{t**s****}(W^{*},Z^{**},Y^{**})=f^{****t**s}(W^{*},Z^{**},Y^{**})$.\\
Conversely, suppose that 
$$f^{****s**t}(Y^{**},X^{**},W^{*})=f^{s**t****}(Y^{**},X^{**},W^{*})\Rightarrow f^{****}=f^{s****t},$$
$$f^{t**s****}(W^{*},Z^{**},Y^{**})=f^{****t**s}(W^{*},Z^{**},Y^{**})\Rightarrow f^{****}=f^{t****s}.$$
Therefore $f^{****}=f^{s****t}=f^{t****s}$. So by theorem \ref{2.1} completes the proof.
 \end{proof}

\begin{theorem}\label{3.6}
Let $X, Y, Z, W$ and $S$ be normed spaces,  $f:X\times Y\times Z\longrightarrow W$ be a bounded tri-linear mapping and $m:X\times Y\longrightarrow S$ and $g:S\times Z\longrightarrow W$ be bounded bilinear mappings, defined by $f(x,y,z)=g(m(x,y),z)$  for each $x\in X, y\in Y$ and $z\in Z$. If $m$ and $g$ are Arens regular then  $f$ is Aron-Berner regular.
\end{theorem}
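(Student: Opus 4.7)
By Theorem~\ref{2.1}(1), Aron-Berner regularity of $f$ reduces to the pair of identities $f^{****}=f^{t****s}$ and $f^{****}=f^{s****t}$. The plan is to verify both by showing that every natural extension of $f$ collapses to the single expression
\[
g^{***}\bigl(m^{***}(x^{**},y^{**}),z^{**}\bigr),
\]
which is unambiguously defined because $m^{***}=m^{r***r}$ and $g^{***}=g^{r***r}$ by Arens regularity of $m$ and $g$. The underlying fact I rely on is that Arens regularity of a bounded bilinear map is equivalent to its bidual extension being separately weak$^{*}$-weak$^{*}$ continuous in both variables: first-variable continuity holds automatically, and Arens regularity is what supplies the second-variable continuity.

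Fix nets $(x_\alpha)\subset X$, $(y_\beta)\subset Y$, $(z_\gamma)\subset Z$ with $w^{*}$-limits $x^{**},y^{**},z^{**}$, and use the factorisation $f(x_\alpha,y_\beta,z_\gamma)=g(m(x_\alpha,y_\beta),z_\gamma)$ to rewrite each of the six iterated-limit formulas of the Introduction as an iterated limit of $g(m(x_\alpha,y_\beta),z_\gamma)$. I would then evaluate these limits from the inside out according to two recipes: whenever the innermost limit is over $\gamma$, it is absorbed into $g^{***}$ (first-slot continuity when the $S$-argument is still frozen in $S$, second-slot continuity via $g^{***}=g^{r***r}$ when the $S$-argument already sits in $S^{**}$); whenever the innermost limit is over $\alpha$ or $\beta$, it is absorbed into $m^{***}$ using the Arens regularity of $m$, and then transported through $g^{***}$ by first-slot continuity. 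Running this procedure on $f^{****}$ (limit order $\gamma,\beta,\alpha$) collapses the triple limit to $g^{***}(m^{***}(x^{**},y^{**}),z^{**})$; for $f^{t****s}$ and $f^{s****t}$ the order of collapse is different, but the joint separate continuity of $m^{***}$ and $g^{***}$ forces the same final value.

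The main delicate point is the bookkeeping: at each swap one must identify which factor of continuity (first or second variable of $g^{***}$, respectively $m^{***}$) is being invoked, and in precisely those swaps where the second variable is at play the Arens regularity hypothesis is essential and cannot be replaced by the automatic first-variable continuity. Once every iterated limit has been collapsed to $g^{***}(m^{***}(x^{**},y^{**}),z^{**})$, the equalities $f^{****}=f^{t****s}=f^{s****t}$ are established and Theorem~\ref{2.1}(1) delivers Aron-Berner regularity of $f$.
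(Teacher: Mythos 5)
Your proposal is correct and follows essentially the same route as the paper: both arguments express each iterated-limit extension of $f$ as $g^{\sharp}(m^{\sharp}(x^{**},y^{**}),z^{**})$ for the appropriate Arens extensions and then use $m^{***}=m^{r***r}$ and $g^{***}=g^{r***r}$ to equate them, your ``absorb the innermost limit by separate weak$^{*}$-continuity'' recipe being exactly the limit-form of the paper's explicit adjoint computations. The only difference is bookkeeping: you collapse $f^{****}$, $f^{t****s}$, $f^{s****t}$ and invoke Theorem~\ref{2.1}(1), while the paper computes $f^{i****i}$, $f^{r****r}$, $f^{j****j}$ and invokes Theorem~\ref{2.1}(2); these are interchangeable.
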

\begin{proof}
Assume $(x_{\alpha}), (y_{\beta})$ and $(z_{\gamma})$ are nets in $X, Y$ and $Z$ converge to $x^{**}\in X^{**},y^{**}\in Y^{**}$ and $z^{**}\in Z^{**}$  in the $w^{*}-$topologies, respectively. Then for everey $w^{*}\in W^{*}$ we have
\begin{eqnarray*}
\langle f^{i****i}(x^{**},y^{**},z^{**}),w^{*}\rangle &=&\lim\limits_\beta\lim\limits_\alpha\lim\limits_\gamma \langle f(x_{\alpha},y_{\beta},z_{\gamma}),w^{*}\rangle\\
&=&\lim\limits_\beta\lim\limits_\alpha\lim\limits_\gamma \langle g(m(x_{\alpha},y_{\beta}),z_{\gamma}),w^{*}\rangle\\
&=&\lim\limits_\beta\lim\limits_\alpha\lim\limits_\gamma \langle g^{*}(w^{*},m(x_{\alpha},y_{\beta})),z_{\gamma}\rangle\\
&=&\lim\limits_\beta\lim\limits_\alpha\langle z^{**},g^{*}(w^{*},m(x_{\alpha},y_{\beta})) \rangle\\
&=&\lim\limits_\beta\lim\limits_\alpha\langle g^{**}(z^{**},w^{*}),m(x_{\alpha},y_{\beta}) \rangle\\
&=&\lim\limits_\beta\lim\limits_\alpha\langle g^{**}(z^{**},w^{*}),m^{r}(y_{\beta},x_{\alpha}) \rangle\\
&=&\lim\limits_\beta\lim\limits_\alpha\langle m^{r*}(g^{**}(z^{**},w^{*}),y_{\beta}),x_{\alpha}\rangle\\
&=&\lim\limits_\beta\langle x^{**},m^{r*}(g^{**}(z^{**},w^{*}),y_{\beta})\rangle\\
&=&\lim\limits_\beta\langle m^{r**}(x^{**},g^{**}(z^{**},w^{*})),y_{\beta}\rangle\\
&=&\langle y^{**},m^{r**}(x^{**},g^{**}(z^{**},w^{*}))\rangle\\
&=&\langle m^{r***}(y^{**},x^{**}),g^{**}(z^{**},w^{*})\rangle\\
&=&\langle m^{r***r}(x^{**},y^{**}),g^{**}(z^{**},w^{*})\rangle\\
&=&\langle g^{***}(m^{r***r}(x^{**},y^{**}),z^{**}),w^{*}\rangle.
\end{eqnarray*}
Therefore $$f^{i****i}(X^{**},Y^{**},Z^{**})=g^{***}(m^{r***r}(X^{**},Y^{**}),Z^{**})\ \ \ \ \ \  (2-4)$$
On the other hand,
\begin{eqnarray*}
\langle f^{r****r}(x^{**},y^{**},z^{**}),w^{*}\rangle &=&\lim\limits_\gamma\lim\limits_\beta\lim\limits_\alpha \langle f(x_{\alpha},y_{\beta},z_{\gamma}),w^{*}\rangle\\
&=&\lim\limits_\gamma\lim\limits_\beta\lim\limits_\alpha \langle g(m(x_{\alpha},y_{\beta}),z_{\gamma}),w^{*}\rangle\\
&=&\lim\limits_\gamma\lim\limits_\beta\lim\limits_\alpha \langle w^{*},g^{r}(z_{\gamma},m(x_{\alpha},y_{\beta}))\rangle\\
&=&\lim\limits_\gamma\lim\limits_\beta\lim\limits_\alpha \langle g^{r*}(w^{*},z_{\gamma}),m(x_{\alpha},y_{\beta})\rangle\\
&=&\lim\limits_\gamma\lim\limits_\beta\lim\limits_\alpha \langle g^{r*}(w^{*},z_{\gamma}),m^{r}(y_{\beta},x_{\alpha})\rangle
\end{eqnarray*}
\begin{eqnarray*}
&=&\lim\limits_\gamma\lim\limits_\beta\lim\limits_\alpha \langle m^{r*}(g^{r*}(w^{*},z_{\gamma}),y_{\beta}),x_{\alpha}\rangle\\
&=&\lim\limits_\gamma\lim\limits_\beta\langle x^{**},m^{r*}(g^{r*}(w^{*},z_{\gamma}),y_{\beta})\rangle\\
&=&\lim\limits_\gamma\lim\limits_\beta\langle m^{r**}(x^{**},g^{r*}(w^{*},z_{\gamma})),y_{\beta}\rangle\\
&=&\lim\limits_\gamma\langle y^{**},m^{r**}(x^{**},g^{r*}(w^{*},z_{\gamma}))\rangle\\
&=&\lim\limits_\gamma \langle m^{r***}(y^{**},x^{**}),g^{r*}(w^{*},z_{\gamma})\rangle\\
&=&\lim\limits_\gamma \langle m^{r***r}(x^{**},y^{**}),g^{r*}(w^{*},z_{\gamma})\rangle\\
&=&\lim\limits_\gamma \langle g^{r**}(m^{r***r}(x^{**},y^{**}),w^{*}),z_{\gamma}\rangle\\
&=&\langle z^{**},g^{r**}(m^{r***r}(x^{**},y^{**}),w^{*})\rangle\\
&=&\langle g^{r***}(z^{**},m^{r***r}(x^{**},y^{**})),w^{*}\rangle\\
&=&\langle g^{r***r}(m^{r***r}(x^{**},y^{**}),z^{**}),w^{*}\rangle.
\end{eqnarray*}
Hence
$$f^{r****r}(X^{**},Y^{**},Z^{**})=g^{r***r}(m^{r***r}(X^{**},Y^{**}),Z^{**})\ \ \ \ \ \  (2-5)$$
Finally,

\begin{eqnarray*}
\langle f^{j****j}(x^{**},y^{**},z^{**}),w^{*}\rangle &=&\lim\limits_\alpha\lim\limits_\gamma\lim\limits_\beta \langle f(x_{\alpha},y_{\beta},z_{\gamma}),w^{*}\rangle\\
&=&\lim\limits_\alpha\lim\limits_\gamma\lim\limits_\beta \langle g(m(x_{\alpha},y_{\beta}),z_{\gamma}),w^{*}\rangle\\
&=&\lim\limits_\alpha\lim\limits_\gamma\lim\limits_\beta \langle w^{*},g^{r}(z_{\gamma},m(x_{\alpha},y_{\beta}))\rangle\\
&=&\lim\limits_\alpha\lim\limits_\gamma\lim\limits_\beta \langle g^{r*}(w^{*},z_{\gamma}),m(x_{\alpha},y_{\beta})\rangle\\
&=&\lim\limits_\alpha\lim\limits_\gamma\lim\limits_\beta \langle m^{*}(g^{r*}(w^{*},z_{\gamma}),x_{\alpha}),y_{\beta}\rangle\\
&=&\lim\limits_\alpha\lim\limits_\gamma\langle y^{**},m^{*}(g^{r*}(w^{*},z_{\gamma}),x_{\alpha})\rangle\\
&=&\lim\limits_\alpha\lim\limits_\gamma\langle m^{**}(y^{**},g^{r*}(w^{*},z_{\gamma})),x_{\alpha}\rangle\\
&=&\lim\limits_\alpha\lim\limits_\gamma\langle m^{***}(x_{\alpha},y^{**}),g^{r*}(w^{*},z_{\gamma})\rangle\\
&=&\lim\limits_\alpha\lim\limits_\gamma\langle g^{r**}(m^{***}(x_{\alpha},y^{**}),w^{*}),z_{\gamma}\rangle\\
&=&\lim\limits_\alpha\langle z^{**},g^{r**}(m^{***}(x_{\alpha},y^{**}),w^{*})\rangle
\end{eqnarray*}
\begin{eqnarray*}
&=&\lim\limits_\alpha\langle g^{r***}(z^{**},m^{***}(x_{\alpha},y^{**})),w^{*}\rangle\\
&=&\lim\limits_\alpha\langle g^{r***r}(m^{***}(x_{\alpha},y^{**}),z^{**}),w^{*}\rangle\\
&=&\lim\limits_\alpha\langle g^{***}(m^{***}(x_{\alpha},y^{**}),z^{**}),w^{*}\rangle\\
&=&\lim\limits_\alpha\langle m^{***}(x_{\alpha},y^{**}),g^{**}(z^{**},w^{*})\rangle\\
&=&\lim\limits_\alpha\langle x_{\alpha}, m^{**}(y^{**},g^{**}(z^{**},w^{*}))\rangle\\
&=&\langle x^{**}, m^{**}(y^{**},g^{**}(z^{**},w^{*}))\rangle\\
&=&\langle  m^{***}(x^{**},y^{**}),g^{**}(z^{**},w^{*})\rangle\\
&=&\langle  g^{***}(m^{***}(x^{**},y^{**}),z^{**}),w^{*}\rangle.
\end{eqnarray*}
Therefore
$$f^{j****j}(X^{**},Y^{**},Z^{**})=g^{***}(m^{***}(X^{**},Y^{**}),Z^{**})\ \ \ \ \ \  (2-6)$$
The maps $m$ and $g$ are Arens regular, so by comparing equations (2-4), (2-5) and (2-6),  we conclude  that $f^{i****i}=f^{j****j}=f^{r****r}$. Now by theorem \ref{2.1} proof follows.
\end{proof}


\end{document}